\newtheorem{Prop}{Proposition}
\newtheorem{Th}{Theorem}
\newtheorem{Cor}{Corollary}
\theoremstyle{remark}
\newtheorem{Rem}{Remark}
\newtheorem{Ex}{Example}
\newcommand{\p}{{\sf P}}
\newcommand{\pF}{{\sf P}_F}
\newcommand{\E}{{\sf E}}
\newcommand{\EF}{{\sf E}_F}
\newcommand{\F}{{\sf F}}
\newcommand{\N}{{\mathbb N}}
\newcommand{\R}{{\mathbb R}}
\newcommand{\f}{{\mathfrak F}}
\newcommand{\B}{{\mathcal B}}
\newcommand{\Var}{{\sf var}}
\newcommand{\1}{{\sf 1}}
\newcommand{\note}[1]{\textcolor{black}{#1}}
\journal{}
\begin{document}

\begin{frontmatter}


\title{The limit theorem for maximum of partial sums of exchangeable random variables}



\author[a1]{Patricia Alonso Ruiz}
\author[a2]{Alexander Rakitko}

\address[a1]{University of Connecticut}
\address[a2]{Moscow State University}

\begin{abstract}
\footnotesize We obtain the analogue of the classical result by Erd\"os and Kac on the limiting distribution of the maximum of partial sums for exchangeable random variables with zero mean and variance one. We show that, if the conditions of the central limit theorem of Blum et al. hold, the limit coincides with the classical one. Under more general assumptions, the probability of the random variables having conditional negative drift appears in the limit.
\end{abstract}

\begin{keyword}
\footnotesize exchangeable random variables \sep limit theorem \sep maximum \sep de Finetti's theorem

MSC: 60G09 \sep 60F05 \sep 60G70

\end{keyword}

\end{frontmatter}


\section{Introduction}\label{intro}
Erd\"os and Kac established in~\cite{EK46} some fundamental results on the distribution of the maximum of partial sums $S_k:=\sum_{i=1}^kX_i$, where $\{X_n\}_{n\in\N}$ is a sequence of \note{independent, identically distributed} (i.i.d.) centered random variables \note{with variance one}. In particular, they proved that the limiting distribution of $n^{-\frac{1}{2}}\max_{1\leq k\leq n}S_k$ is given by \note{$(2\Phi(x)-1)\1_{[0,\infty)}(x)$}, where $\Phi(\cdot)$ denotes the \note{probability} distribution function \note{(p.d.f.)} of the standard normal distribution. 

\medskip

\note{Our interest in studying the (rescaled) maximum of partial sums is motivated by its manifold applications. On the one hand, it is directly related to first passage times of random walks and renewal theory~\cite{Hey67,Sie68}. On the other hand, in the classical i.i.d. setting, this statistic has since long been employed in numerous research areas such as hydrology~\cite{Bui82}, reservoir storage~\cite{Hur51} and change-point analysis~\cite{JJS87}. Moreover, as a matter of study in extreme value theory, this type of limit theorems are of especial relevance, for instance in finance (see~\cite{Nov12} and references therein).}

\medskip

The purpose of this paper is to generalize \note{the original result of Erd\"os and Kac} to exchangeable sequences of random variables \note{and thereby extend the mentioned statistic to further stochastic models. Exchangeable random variables, introduced by de Finetti in~\cite{DeF30}, are random variables with the property of being} conditionally independent. \note{Equivalently, one can think of them as mixtures of i.i.d. random variables directed by a random measure.} The study of classical results of probability theory in the exchangeable setting started with the Central Limit Theorem (CLT) by Blum, Chernoff, Rosenblatt and Teicher in~\cite{BCRT58} and it led to a series of works~\cite{Web80,Eag82,HT87} that continues expanding (see e.g.~\cite{BR97,Cha06,FLR12,YL15}). \note{E}xchangeable random variables \note{are} of great interest due to their versatility as stochastic models~\cite{Aus08,PR09,Ald10} and their wide applicability in genetics~\cite{Kin80}, Bayesian analysis~\cite{CM15} and \note{many other branches} of statistical analysis~\cite{GST00,YZ08,BR15}.

\medskip

\note{The limit theorem considered in this paper contributes to extend results of extreme value theory to the exchangeable context.} In this direction, Berman obtained in~\cite{Ber62} the limiting distribution of the maximum of an exchangeable sequence of random variables. 

\medskip

Our results show that, if the classical conditions of the CLT of Blum et al. hold, one obtains the original statement of Erd\"os and Kac in the i.i.d. setting, c.f. Proposition~\ref{Th EK ex case}. Dropping off the assumption on the 
\note{variance of the directing random measure} gives rise in \note{Theorem}~\ref{Th EK weak} to a limiting distribution that, \note{in the non-degenerate case,} resembles the previous result \note{and} involves the distribution function of a mixture of Gaussians. \note{Consequently, we discover in Corollary~\ref{Th general max sum} that, when no assumptions are imposed to the directing random measure, the limit of the distribution of $n^{-1/2}\max_{1\leq k\leq n}S_k$}
\note{depends} on the conditional drift \note{and the conditional variance} of the random variables. In particular, we see that the probability of the random variables having negative drift makes a substantial contribution to this limit.

\medskip

The paper is organized as follows: In Section~\ref{section defs}, we fix notation and briefly \note{review} basic results of the theory of exchangeable random variables. Section~\ref{section results} is devoted to presenting \note{and proving the} different generalizations of the limit distribution of Erd\"os and Kac. \note{Finally, these results are furnished with examples in Section~\ref{Examples}}. 

\section{Definitions and auxiliary results}\label{section defs}
Let $\Pi(n)$ denote the set of permutations of $\{1,\ldots, n\}$. A sequence of random variables $\{X_n\}_{n\in\N}$ is said to be exchangeable if for any $n\in\N$, $X_1,\ldots, X_n$ are exchangeable, i.e., for any permutation $\pi\in\Pi(n)$,
\begin{equation*}
Law(X_1,\ldots,X_n) = Law(X_{\pi(1)},\ldots, X_{\pi(n)}).
\end{equation*}
Alternatively we write $(X_1,\ldots, X_n)\stackrel{d}{=}(X_{\pi(1)},\ldots, X_{\pi(n)})$. The concept of \note{exchangeability} was introduced by de Finetti in~\cite{DeF30}, who in particular proved that such a sequence is conditionally i.i.d. given the $\sigma$-field of permutable events.

\medskip

An essential tool in our proofs is de Finetti's theorem. Let $\f$ denote the collection of all \note{p.d.f.s} on $\mathbb R$ 
\note{with} the topology of weak convergence of distribution functions. De Finetti's theorem states that for an infinite sequence of exchangeable random variables $\{X_n\}_{n\in\N}$, there exists a unique probability measure $\mu$ on the Borel $\sigma$-field $\mathfrak{A}$ of subsets of $\f$ such that for any $n\geq 1$,
\begin{equation}\label{de_fin_thm}
\p\big(g(X_1,\ldots,X_n)\in B\big)=\int_{\f}\pF\big(g(X_1,\ldots,X_n)\in B\big)\mu(d F)
\end{equation}
\note{holds} for any Borel set $B\in\B(\note{\R})$ and any Borel function $g:\R^n\rightarrow\R$. Here, $\pF\big(g(X_1,\ldots,X_n)\in B\big)$ is the probability of the event under the assumption that the random variables $X_1,\ldots,X_n$ are independent with common \note{p.d.f.} $F$. The mean $\E_F g(X_1,\ldots,X_n)$ is obtained by integrating $g$ with respect to the \note{probability measure} $F$. \note{Let us} now denote by $\F\colon\Omega\to\f$ a random variable whose probability distribution is given by the measure $\mu$ from de Finetti's theorem. The conditional mean $\E_\F g(X_1,\ldots,X_n)$ is defined analogously to $\E_F g(X_1,\ldots,X_n)$ \note{and is itself} a random variable because the \note{p.d.f.} $\F$ is random. It should be noted that de Finetti's theorem fails for finite collections of exchangeable random variables. We refer to~\cite{Ald85} for further details on this subject.

\medskip

The law of large numbers (LLN) for exchangeable sequences was established by Hu and Taylor in~\cite{HT87}. They showed that for an exchangeable sequence $\{X_n\}_{n\in\N}$ such that $\E_\F|X_1|<\infty$ $\mu$-a.s.,
\begin{equation}\label{Hu_Taylor}
\frac{1}{n}S_n\xrightarrow{~\text{a.s.}~}0\quad\text{as}~n\to\infty\qquad\text{ if and only if } \qquad \E X_1X_2=0.
\end{equation}
It is not difficult to see that $\E X_1 X_2 = 0 $ is equivalent to $\E_\F X_1=0$ $\mu$-a.s. As already mentioned in the introduction, Blum, Chernoff, Rosenblatt and Teicher proved in~\cite{BCRT58} that for an exchangeable sequence with zero mean and variance one the CLT holds if and only if 
\begin{equation}\label{cond CLT ex seq}
\E X_iX_j=0\qquad\text{and}\qquad \E X_i^2 X_j^2=1\qquad\forall~i\neq j.
\end{equation}

In general, it is possible to obtain limit theorems for sums of exchangeable sequences under weaker assumptions. \note{De Finetti's theorem can be rephrased (see~\cite[Theorem 3.1]{Ald85}) by saying that the} infinite exchangeable sequence \note{$\{X_n\}_{n\in\N}$} is a mixture of i.i.d. \note{random variables} directed by \note{the} random measure \note{$\F$}, whose probability distribution $\mu$ is given in~\eqref{de_fin_thm}. With this notation, 
\[
\p_\F((X_1,\ldots, X_n)\in \note{A})=\prod_{i=1}^n\note{\F(A_i)},\qquad \note{A=A_1\times\cdots\times A_n}\in\B(\R^n),
\]
and \note{$\E_\F X_1=\int_\R x\,\F(dx)$}. Moreover, if $\E|X_1|<\infty$, then the LLN
\begin{equation}\label{LLN martingale}
\frac{1}{n}S_n\xrightarrow{~\text{a.s.}~}\E_\F X_1\qquad\text{as }~n\to\infty
\end{equation}
holds (see e.g.~\cite[p.17]{Ald85}). \note{This directly implies the necessity of~\eqref{Hu_Taylor} since $\E X_1X_2=0$ means that the directing random measure $\F$ has zero mean, i.e. $\E_\F X_1=0$ $\mu$-a.s.} Furthermore, we have that if $0<\E X_1^2<\infty$, then the CLT
\begin{equation}\label{CLT sums Aldous}
\frac{S_n-\note{n}\E_\F X_1}{\sqrt{n}\sigma_\F}\xrightarrow{\quad d\quad}\mathcal{N}(0,1)
\end{equation}
holds, where $\sigma_\F^2:=\E_\F (X_1-\E_{\note{\F}} X_1)^2$. \note{This formulation generalizes the necessity of~\eqref{cond CLT ex seq} because again $\F$ has zero mean and $\E X_1^2X_2^2=1$ is equivalent to the fact that $\F$ has variance one, i.e. $\sigma^2_\F=1$ a.s. These limit theorems can also be obtained in terms of conditional characteristic functions, see~\cite{YWL14}.}

\section{Limit theorem for maximum of sums of exchangeable random variables}\label{section results}
In this \note{section,} we investigate the limiting distribution of the \note{largest} partial sum of an exchangeable sequence of random variables. \note{In a first step, this limit is obtained under the assumption that the directing random measure $\F$ has zero mean and variance one. Secondly, the variance-one assumption is removed and the corresponding limiting theorem is derived. Finally, the latter result is applied to analyze the limit of the probability of the maximum of partial sums for a sequence with a general directing random measure.}

\medskip

\note{Let us} start by recalling the original result of Erd\"os and Kac. 

\begin{Th}\cite{EK46}\label{Th EK}
Let \note{$\{X_n\}_{n\in\N}$} be a sequence of i.i.d. random variables with zero mean and variance one, and let $S_k:=\sum_{i=1}^k X_i$. Then,
\[
\lim_{n\to\infty}\p(\max(S_1,\ldots, S_n)<x\sqrt{n})=G(x),
\]
where \note{$G\colon\R\to\R$ is given by}
\begin{equation}\label{lim EK}
G(x):=\note{(2\Phi(x)-1)\1_{[0,\infty)}(x)}
\end{equation}
\note{and $\Phi$ denotes the p.d.f. of the standard normal distribution.}
\end{Th}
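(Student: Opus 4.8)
The plan is to prove the classical Erd\"os--Kac theorem for i.i.d. centered unit-variance random variables by reducing the distributional limit of $n^{-1/2}\max_{1\le k\le n}S_k$ to Donsker's invariance principle. First I would recall that by Donsker's theorem the rescaled partial-sum process $W_n(t):=n^{-1/2}S_{\lfloor nt\rfloor}$, $t\in[0,1]$, converges weakly in $C[0,1]$ (or in the Skorokhod space $D[0,1]$) to a standard Brownian motion $B=\{B(t)\}_{t\in[0,1]}$. The key point is that the functional $\psi\colon D[0,1]\to\R$, $\psi(w):=\sup_{0\le t\le 1}w(t)$, is continuous with respect to the sup-norm topology, and the limiting process $B$ has continuous paths almost surely, so the set of discontinuities of $\psi$ is $B$-null. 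Hence by the continuous mapping theorem,
\[
n^{-1/2}\max_{1\le k\le n}S_k=\psi(W_n)\xrightarrow{\ d\ }\psi(B)=\sup_{0\le t\le 1}B(t).
\]

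The second ingredient is the explicit distribution of $M:=\sup_{0\le t\le 1}B(t)$, which is obtained from the reflection principle: for $x\ge 0$,
\[
\p(M\ge x)=2\,\p(B(1)\ge x)=2\bigl(1-\Phi(x)\bigr),
\]
so that $\p(M<x)=1-2(1-\Phi(x))=2\Phi(x)-1$ for $x\ge 0$, while $\p(M<x)=0$ for $x<0$ since $M\ge B(0)=0$. This is exactly $G(x)=(2\Phi(x)-1)\1_{[0,\infty)}(x)$. Combining the two ingredients, $\p(\max(S_1,\ldots,S_n)<x\sqrt n)=\p(\psi(W_n)<x)\to\p(M<x)=G(x)$ at every continuity point of $G$; since $\Phi$ is continuous and $G$ is continuous everywhere including at $x=0$ (where both one-sided limits equal $0$), the convergence holds for all $x\in\R$.

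For completeness I would also indicate the alternative, more elementary route that Erd\"os and Kac themselves essentially used, avoiding the full machinery of weak convergence on function spaces: one shows directly that $\p(n^{-1/2}\max_{1\le k\le n}S_k<x)$ converges by a combinatorial reflection argument at the level of the random walk (the discrete ballot-type identity $\p(\max_{k\le n}S_k\ge a)\approx 2\p(S_n\ge a)$ up to lower-order corrections controlled by the local CLT) together with the ordinary CLT for $S_n/\sqrt n$. Either way, the substance of the argument is (i) an invariance principle and (ii) the reflection principle.

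The main obstacle, and the only genuinely delicate point, is establishing the invariance principle itself in a self-contained way---i.e., tightness of the sequence $\{W_n\}$ in $C[0,1]$ or $D[0,1]$, which requires a moment or maximal-inequality estimate (Kolmogorov's or Ottaviani's inequality) to control the modulus of continuity; convergence of finite-dimensional distributions is then just the multivariate CLT. Since this theorem is quoted from~\cite{EK46} rather than proved afresh, in practice I would simply cite Donsker's theorem and the reflection principle and note that the continuity of $G$ at $x=0$ upgrades the convergence from continuity points to all of $\R$.
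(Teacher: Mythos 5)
The paper gives no proof of this statement: Theorem~\ref{Th EK} is quoted verbatim from~\cite{EK46} and used as a black box in the proofs of Proposition~\ref{Th EK ex case} and Theorem~\ref{Th EK weak}, so there is no argument in the paper to compare yours against. That said, your proof is correct and is the standard modern route: Donsker's invariance principle, the continuous mapping theorem applied to $w\mapsto\sup_{0\le t\le 1}w(t)$, and the reflection principle for Brownian motion giving $\p(\sup_{t\le 1}B(t)<x)=(2\Phi(x)-1)\1_{[0,\infty)}(x)$. You correctly handle the only point where care is needed, namely that $G$ is continuous at $x=0$ (both one-sided limits equal $0$), so convergence at continuity points upgrades to all of $\R$. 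One cosmetic gap worth a sentence if you wrote this out in full: $\psi(W_n)=n^{-1/2}\max_{0\le k\le n}S_k$ includes the index $k=0$, whereas the statement concerns $\max_{1\le k\le n}S_k$; the two maxima coincide unless $\max_{1\le k\le n}S_k<0$, and for $x<0$ one checks directly that $\p(\max_{1\le k\le n}S_k<x\sqrt n)\le\p(X_1<x\sqrt n)\to 0=G(x)$, so the discrepancy is harmless. Your closing remark is also historically apt: Erd\"os and Kac did not have Donsker's theorem and instead proved an ``invariance'' statement directly (the limit law is the same for all admissible increment distributions) and then computed it for a convenient special case, which is essentially your second, more elementary route.
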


A direct extension of this theorem in the exchangeable setting is obtained when we assume that the conditions for the \note{classical} CLT given in~\eqref{cond CLT ex seq} are satisfied.

\begin{Prop}\label{Th EK ex case}
Let $\{X_n\}_{n\in\N}$ be an exchangeable sequence of random variables with zero mean and variance one satisfying~\eqref{cond CLT ex seq}. Then,
\[
\lim_{n\to\infty}\p(\max(S_1,\ldots, S_n)<x\sqrt{n})=G(x),
\]
where \note{$G\colon\R\to\R$} is given by~\eqref{lim EK}.
\end{Prop}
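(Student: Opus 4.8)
The plan is to reduce the exchangeable statement to the classical Erd\H{o}s--Kac theorem (Theorem~\ref{Th EK}) by conditioning on the directing random measure $\F$ and then integrating with respect to $\mu$. Concretely, I would write, using de Finetti's theorem~\eqref{de_fin_thm} with $g(X_1,\dots,X_n)=\max(S_1,\dots,S_n)/\sqrt n$ and $B=(-\infty,x)$,
\[
\p\big(\max(S_1,\dots,S_n)<x\sqrt n\big)=\int_{\f}\pF\big(\max(S_1,\dots,S_n)<x\sqrt n\big)\,\mu(dF).
\]
Under $\pF$ the variables $X_1,\dots,X_n$ are i.i.d.\ with law $F$, so if $F$ has mean zero and variance one the inner probability converges pointwise (in $F$) to $G(x)$ by Theorem~\ref{Th EK}. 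The first task is therefore to check that the hypotheses of Proposition~\ref{Th EK ex case} force $\F$ to have mean zero and variance one $\mu$-a.s.: this is exactly the content of the remarks following~\eqref{cond CLT ex seq}, namely that $\E X_iX_j=0$ is equivalent to $\E_\F X_1=0$ $\mu$-a.s.\ and $\E X_i^2X_j^2=1$ together with $\E X_1^2=1$ is equivalent to $\sigma_\F^2=1$ $\mu$-a.s.

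Once almost every $F$ in the support of $\mu$ is centered with unit variance, the integrand $f_n(F):=\pF(\max(S_1,\dots,S_n)<x\sqrt n)$ converges to $G(x)$ for $\mu$-a.e.\ $F$. Since $0\le f_n(F)\le 1$, the dominated convergence theorem gives
\[
\lim_{n\to\infty}\p\big(\max(S_1,\dots,S_n)<x\sqrt n\big)=\int_{\f}G(x)\,\mu(dF)=G(x),
\]
which is the claim. So the skeleton of the argument is: (i) translate the moment conditions~\eqref{cond CLT ex seq} into a.s.\ statements about $\F$; (ii) apply the classical Erd\H{o}s--Kac theorem conditionally, i.e.\ to $X_1,\dots,X_n$ i.i.d.\ $\sim F$ for $\mu$-a.e.\ $F$; (iii) pass to the limit under the integral sign by bounded convergence.

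The main subtlety — and the step I would be most careful about — is (i), the precise justification that~\eqref{cond CLT ex seq} and the hypothesis $\E X_1^2=1$ imply $\sigma_\F^2=1$ for $\mu$-a.e.\ $F$, rather than merely on average. Using $\E_\F X_1=0$ a.s., one has $\sigma_\F^2=\E_\F X_1^2$, and then $\E X_i^2X_j^2=\E[(\E_\F X_1^2)^2]=\E[\sigma_\F^4]$ while $\E X_1^2=\E[\sigma_\F^2]$; the conditions give $\E[\sigma_\F^4]=1=(\E[\sigma_\F^2])^2$, so $\Var(\sigma_\F^2)=0$ and hence $\sigma_\F^2$ is a.s.\ constant, necessarily equal to $1$. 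One should also note that the finiteness of $\E_\F|X_1|$ and $\E_\F X_1^2$ needed to speak of these conditional moments follows from $\E X_1^2=1<\infty$ together with exchangeability (e.g.\ via~\eqref{LLN martingale} and~\eqref{CLT sums Aldous}, or directly). A minor additional point is that $G$ is continuous on $\R$, so convergence of distribution functions at every $x$ is genuinely the statement of weak convergence; no special handling of the jump at $0$ is required because $G$ has none.
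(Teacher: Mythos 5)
Your proposal is correct and follows essentially the same route as the paper: de Finetti's theorem to condition on $F$, the classical Erd\H{o}s--Kac theorem applied to the i.i.d.\ sequence under $\pF$, and dominated convergence (the integrand being bounded by one) to pass to the limit. The only difference is that you spell out the verification that~\eqref{cond CLT ex seq} forces $\E_\F X_1=0$ and $\sigma_\F^2=1$ $\mu$-a.s.\ (via $\E[\sigma_\F^4]=(\E[\sigma_\F^2])^2$), a step the paper leaves implicit by referring back to the discussion in Section~\ref{section defs}; your argument for it is correct.
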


\begin{proof}
Since $\p_F(\max(S_1,\ldots ,S_n)<x\sqrt{n})$ is uniformly bounded by one and $\mu$ is a probability measure, applying de Finetti's theorem, Lebesgue dominated convergence theorem and Theorem~\ref{Th EK} to the conditional probability yields
\begin{multline*}
\lim_{n\to\infty}\p(\max(S_1,\ldots, S_n)<x\sqrt{n})\\
=\int_\f\lim_{n\to\infty}\p_F(\max(S_1,\ldots, S_n)<x\sqrt{n})\,\mu(dF)=G(x).
\end{multline*}
\end{proof}

\note{
\begin{Rem}
Notice that in fact, every limiting result originally proved by Erd\"os and Kac in~\cite{EK46} can be obtained in the same fashion.
\end{Rem}
}

\note{The next natural step to generalize Proposition~\ref{Th EK ex case} consists in considering an exchangeable sequence $\{X_n\}_{n\in\N}$ whose directing random measure $\F$ only satisfies the zero-mean condition. From~\eqref{CLT sums Aldous} we know that} 
\begin{equation}\label{CLT zero mean}
\frac{1}{\sqrt{n}}S_n\xrightarrow{\; d\;} Z\cdot\sigma_\F,
\end{equation}
where $Z\sim \mathcal{N}(0,1)$ is independent of $\sigma_\F$ \note{and $\F$ has distribution measure $\mu$ from de Finetti's theorem}. \note{Let us now define $G_\mu\colon\R\to\R$ as
\begin{equation}\label{limit distr}
G_\mu(x):=\int_\f \1_{(0,\infty)}\,(\sigma^2_F)G(x/\sigma_F)\,\mu(dF),
\end{equation}
where $G$ was given~\eqref{lim EK}. Then we have the following result.}

\begin{Th}\label{Th EK weak}
Let $\{X_n\}_{n\in\N}$ be an exchangeable sequence of random variables with zero mean and variance $0<\E X_1^2<\infty$ such that $\E X_1X_2=0$. Then,
\begin{equation}\label{lim EK weak}
\lim_{n\to\infty}\p(\max(S_1,\ldots,S_n)<x\sqrt{n})=\p(\sigma^2_\F=0)\1_{[0,\infty)}(x)+G\note{_\mu}(x),
\end{equation}
where $G_\mu\colon\R\to\R$ is given in~\eqref{limit distr} and $\mu$ is the distribution of the directing random measure of the sequence $\{X_n\}_{n\in\N}$.
\end{Th}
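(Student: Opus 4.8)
The plan is to condition on the directing random measure $\F$ via de Finetti's theorem and split the integration over $\f$ according to whether $\sigma_F^2=0$ or $\sigma_F^2>0$. Writing
\[
\p(\max(S_1,\ldots,S_n)<x\sqrt n)=\int_\f \p_F(\max(S_1,\ldots,S_n)<x\sqrt n)\,\mu(dF),
\]
I would first treat the set $\{F\in\f:\sigma_F^2=0\}$. Since $\F$ has zero mean $\mu$-a.s., on this set the conditional distribution $F$ is degenerate at $0$, so $S_k=0$ for all $k$ under $\p_F$ and hence $\p_F(\max(S_1,\ldots,S_n)<x\sqrt n)=\1_{[0,\infty)}(x)$ for every $n$; this contributes exactly $\p(\sigma_\F^2=0)\1_{[0,\infty)}(x)$ in the limit. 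On the complementary set $\{\sigma_F^2>0\}$, the i.i.d.\ random variables $X_1,\ldots,X_n$ under $\p_F$ have mean zero and finite positive variance $\sigma_F^2$, so by rescaling ($S_k/(\sqrt n\,\sigma_F)$) and applying the original Erd\"os--Kac Theorem~\ref{Th EK} to the normalized sequence, $\p_F(\max(S_1,\ldots,S_n)<x\sqrt n)=\p_F(\max(S_1,\ldots,S_n)/(\sqrt n\,\sigma_F)<x/\sigma_F)\to G(x/\sigma_F)$ as $n\to\infty$. Finally, applying the dominated convergence theorem (the integrand is bounded by $1$ and $\mu$ is a probability measure) lets me pass the limit inside the integral, and the contribution of this piece is precisely $\int_\f \1_{(0,\infty)}(\sigma_F^2)G(x/\sigma_F)\,\mu(dF)=G_\mu(x)$.

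Combining the two pieces yields the claimed identity~\eqref{lim EK weak}. The step requiring the most care is the justification that, for $\mu$-almost every $F$ with $\sigma_F^2>0$, the classical Erd\"os--Kac limit applies to the i.i.d.\ sequence governed by $F$: one must note that Theorem~\ref{Th EK} is stated for variance-one sequences, so it is applied to $\{X_i/\sigma_F\}_{i\le n}$, which is legitimate since $\sigma_F^2$ is a fixed (non-random) positive constant once $F$ is fixed. A small subtlety is that one should confirm the degeneracy argument on $\{\sigma_F^2=0\}$ genuinely uses the zero-mean hypothesis $\E X_1X_2=0$, equivalently $\E_\F X_1=0$ $\mu$-a.s.; without it the degenerate $F$ could sit at a nonzero point and the indicator would be shifted. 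Beyond these points, the argument is a routine conditioning-and-dominated-convergence scheme entirely parallel to the proof of Proposition~\ref{Th EK ex case}, the only new ingredient being the explicit bookkeeping of the degenerate part of $\mu$.
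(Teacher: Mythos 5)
Your proof is correct and follows essentially the same route as the paper: decompose the de Finetti integral according to $\sigma_F^2=0$ versus $\sigma_F^2>0$, handle the degenerate part directly (using $\E_\F X_1=0$ $\mu$-a.s.), apply the classical Erd\H{o}s--Kac theorem to the rescaled sequence $\{X_i/\sigma_F\}$ on the non-degenerate part, and pass to the limit by dominated convergence. Your write-up is in fact more explicit than the paper's about the rescaling step and the role of the zero-mean hypothesis, but there is no substantive difference in approach.
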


\begin{proof}
In view of~\eqref{CLT zero mean}, de Finetti's theorem, Lebesgue dominated convergence theorem and Theorem~\ref{Th EK} lead to
\begin{align*}
&\lim_{n\to\infty}\p(\max(S_1,\ldots,S_n)<x\sqrt{n})\\
&=\p(\sigma^2_\F=0)\1_{[0,\infty)}(x)+
\int_\f\lim_{n\to\infty}\1_{(0,\infty)}(\sigma_F^2)\p_F(\max(S_1,\ldots,S_n)<x\sqrt{n})\,\mu(dF)\\
&=\p(\sigma^2_\F=0)\1_{[0,\infty)}(x)+\int_\f\1_{(0,\infty)}(\sigma_F^2) G(x/\sigma_F)\,\mu(dF).
\end{align*}
\end{proof}
\note{
\begin{Rem}\label{remark degenerate}
Notice that in the exchangeable setting one may encounter sequences of non constant random variables with $\p(\sigma_\F^2=0)>0$ (see Example~\ref{Example 2}). 
In particular, Theorem~\ref{Th EK weak} shows that if the sequence is non-degenerated in the sense that the conditional variance is almost surely positive, i.e. $\p(\sigma_\F^2>0)=1$, then the limiting distribution in~\eqref{lim EK weak} becomes the mixture
\begin{equation}\label{eq mixture}
\int_\f G(x/\sigma_F)\,\mu(dF).
\end{equation}
\end{Rem} 
\begin{Rem}
The exchangeable counterparts of the limiting distributions in~\cite{EK46} under the assumptions of Theorem~\ref{Th EK weak} can be derived in the same fashion.
\end{Rem}
}

\medskip

\note{We conclude this section applying Theorem~\ref{Th EK weak} to investigate the limit of the distribution of $n^{-1/2}\max_{1\leq k\leq n}S_k$ when no assumptions are imposed to the directing random measure of the exchangeable sequence $\{X_n\}_{n\in\N}$.}

\begin{Cor}\label{Th general max sum}
Let $\{X_n\}_{n\in\N}$ be an exchangeable sequence of random variables with zero mean and variance $0<\E X_1^2<\infty$.
Then,
\begin{multline*}
\lim_{n\to\infty}\p(\max(S_1,\ldots,S_n)<x\sqrt{n})\\
=\p(\E_\F X_1<0)+\p(\E_\F X_1=0,\sigma^2_\F=0)\1_{[0,\infty)}(x)+G'_\mu(x),
\end{multline*}
where $G'\colon\R\to\R$ is given by
\begin{equation*}
G'_\mu(x)=\int_\f\1_{\{0\}}(\E_F X_1)\1_{(0,\infty)}(\sigma_F^2)\, G(x/\sigma_F)\,\mu(dF)
\end{equation*}
and $G$ is defined in~\eqref{lim EK}.
\end{Cor}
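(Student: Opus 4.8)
The plan is to reduce the corollary to Theorem~\ref{Th EK weak} by splitting the integral from de Finetti's theorem according to the sign of the conditional drift $\E_F X_1$. First I would write, exactly as in the proofs of Proposition~\ref{Th EK ex case} and Theorem~\ref{Th EK weak},
\[
\p(\max(S_1,\ldots,S_n)<x\sqrt{n})=\int_\f\p_F(\max(S_1,\ldots,S_n)<x\sqrt{n})\,\mu(dF),
\]
and partition $\f$ into the three Borel sets $\{F:\E_F X_1<0\}$, $\{F:\E_F X_1=0\}$ and $\{F:\E_F X_1>0\}$. On each piece I want to identify the pointwise limit of $\p_F(\max(S_1,\ldots,S_n)<x\sqrt{n})$ as $n\to\infty$ and then invoke the Lebesgue dominated convergence theorem (the integrand being bounded by $1$ and $\mu$ a probability measure) to pass the limit inside the integral.

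The second step is the analysis of the three regimes under a \emph{fixed} $F$, where the $X_i$ are genuinely i.i.d.\ with p.d.f.\ $F$. When $\E_F X_1<0$, the strong law of large numbers gives $S_n/n\to\E_F X_1<0$ a.s.\ under $\p_F$, so $\max(S_1,\ldots,S_n)$ is a.s.\ bounded and in fact the running maximum $\max_{k\ge1}S_k$ is $\p_F$-a.s.\ finite; hence $\p_F(\max(S_1,\ldots,S_n)<x\sqrt n)\to 1$ for every $x\in\R$ (for $x>0$ because $x\sqrt n\to\infty$ while the maximum stays bounded; for $x\le0$ one still has $\p_F(\max_{k\ge1}S_k<x\sqrt n)\to\p_F(\max_{k\ge1}S_k<-\infty)$? — no: here one must be slightly careful, since for $x<0$ we have $x\sqrt n\to-\infty$, so $\p_F(\max S_k<x\sqrt n)\to0$). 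Let me restate this cleanly: for $\E_F X_1<0$ the limit is $\1_{[0,\infty)}(x)$ times something — actually the limit is $\1_{(0,\infty)}(x)\cdot 1$, and for $x\le 0$ it is $0$; but since the corollary's right-hand side has $\p(\E_\F X_1<0)$ \emph{without} an indicator, the correct statement must be that for $x>0$ the contribution is $\p(\E_\F X_1<0)$ and for $x\le0$ it is $0$. I would double-check the edge value $x=0$: with $\E_F X_1<0$, $\p_F(\max_{k}S_k<0)$ — this is generally strictly between $0$ and $1$ and does \emph{not} tend to a clean limit under rescaling, but $\p_F(\max(S_1,\dots,S_n)<0)$ is a fixed number not depending on $n$, so along the sequence $x\sqrt n=0$ it is constant; thus at $x=0$ the limit over this piece is $\p_F(\max_{1\le k\le n}S_k<0)$, which does depend on $n$ and need not converge. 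So the clean statement holds only for $x\neq0$, i.e.\ the corollary is really a statement about continuity points, matching the fact that $G$ in~\eqref{lim EK} has its jump at $0$.

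The third step handles $\E_F X_1>0$: then $S_n/n\to\E_F X_1>0$ a.s., so $\max(S_1,\ldots,S_n)\sim n\,\E_F X_1\to\infty$ faster than $x\sqrt n$, giving $\p_F(\max(S_1,\ldots,S_n)<x\sqrt n)\to0$ for every $x$; hence this piece contributes nothing. On the set $\{\E_F X_1=0\}$ we are exactly in the situation of Theorem~\ref{Th EK weak} applied conditionally: splitting further by $\sigma_F^2=0$ versus $\sigma_F^2>0$, the former contributes $\1_{[0,\infty)}(x)$ and the latter contributes $G(x/\sigma_F)$, so that part of the integral is $\p(\E_\F X_1=0,\sigma_\F^2=0)\1_{[0,\infty)}(x)+G'_\mu(x)$ with $G'_\mu$ as defined. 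Assembling the three pieces and using dominated convergence yields the claimed formula (for $x\neq0$; the statement at $x=0$ should be read as the limit inferior/superior or one simply restricts to continuity points of the limiting d.f.). The main obstacle is the $\E_F X_1<0$ regime: one needs the classical fact that a random walk with negative drift has an a.s.\ finite all-time maximum under $\p_F$, and then a short argument that the \emph{unconditional} quantity $\p(\max_{k\ge1}S_k<\infty,\ \E_\F X_1<0)=\p(\E_\F X_1<0)$ so that the limit $1$ integrates up to $\p(\E_\F X_1<0)$; care with the non-convergence at $x=0$ is the only genuinely delicate point and I would flag it with a remark rather than claim convergence there.
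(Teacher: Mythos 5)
Your proposal is correct and follows essentially the same route as the paper: the same three-way partition of $\f$ into $\{\E_F X_1>0\}$, $\{\E_F X_1=0\}$, $\{\E_F X_1<0\}$, dominated convergence to pass the limit inside the de Finetti integral, and a reduction of the zero-drift piece to the proof of Theorem~\ref{Th EK weak}. The only technical difference is in the drift regimes: the paper uses Chebyshev's inequality for $\f_+$ and the H\'ayek--R\'enyi inequality for $\f_-$, whereas you use the strong law of large numbers and the a.s.\ finiteness of the all-time maximum of a negative-drift random walk; both work, and the paper's quantitative bounds are arguably more elementary. Your caveat about $x\le 0$ is well taken and is in fact glossed over by the paper: for $x<0$ one has $\p_F(\max_{1\le k\le n}S_k<x\sqrt{n})\le\p_F(X_1<x\sqrt{n})\to 0$ even on $\f_-$, and the paper's H\'ayek--R\'enyi bound is only valid when $x\sqrt{n}-k\E_FX_1>0$ for all $k$, i.e.\ it silently requires $x>0$; so the displayed identity should be read for $x>0$ (equivalently, at continuity points of the limit), exactly as you flag.
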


\begin{proof}
By de Finetti's theorem,
\begin{align*}
\p&(\max(S_1,\ldots,S_n)<x\sqrt{n})=\int_{\f_+}\p_F(\max_{1\leq k\leq n}S_k <x\sqrt{n})\,\mu(dF)\\
&+\int_{\f_0}\p_F(\max_{1\leq k\leq n}S_k <x\sqrt{n})\,\mu(dF)+\int_{\f
_-}\p_F(\max_{1\leq k\leq n}S_k <x\sqrt{n})\,\mu(dF)\\
&=:I_{n,+}(x)+I_{n,0}(x)+I_{n,-}(x),
\end{align*}
where $\f_+:=\{F\in\f\,|\,\EF X_1>0\}$, $\f_0:=\{F\in\f\,|\,\EF X_1=0\}$, and $\f_-:=\{F\in\f\,|\,\EF X_1<0\}$.

On the one hand, for any $F\in\f_+$,
\[
\p_F(\max_{1\leq k\leq n}S_k <x\sqrt{n})\leq\p_F(S_n-n\E_F X_1<x\sqrt{n})\leq\frac{\sigma_F^2}{n x^2}
\]
which tends to zero as $n\to\infty$. Lebesgue dominated convergence theorem thus yields $\lim\limits_{n\to\infty}I_{n,+}(x)=0$. On the other hand, following the proof of Theorem~\ref{Th EK weak} we have that
\begin{align*}
\lim_{n\to\infty} I_{n,0}(x)&=\int_{\f_0}\1_{(0,\infty)}(\sigma_F)\1_{[0,\infty)}(x)\,\mu(dF)+\int_{\f_0}\1_{(0,\infty)}(\sigma_F)\,G(x/\sigma_F)\,\mu(dF)\\
&=\p(\E_\F X_1=0,\sigma^2_\F=0)\1_{[0,\infty)}(x)+G'_\mu(x).
\end{align*}

Finally, if $F\in\f_-$, the H\'ayek-R\'enyi inequality leads to
\[
\p_F(\max_{1\leq k\leq n}S_k <x\sqrt{n})\geq1-\sum_{k=1}^n\frac{\sigma_F^2}{(x\sqrt{n}-k\E_F X_1)^2}\geq 1-\frac{\sigma_F^2}{\E_F X_1 x\sqrt{n}},
\]
which tends to one as $n\to\infty$. By Lebesgue dominated convergence theorem, $\lim\limits_{n\to\infty}I_{n,-}(x)=\p(\E_\F X_1<0)$ and the result follows.
\end{proof}
\note{
\begin{Rem}
Notice that the limit appearing in Corollary~\ref{Th general max sum} is not a distribution function unless the assumptions reduce to those of Theorem~\ref{Th EK weak}. Thus, convergence in distribution holds in Proposition~\ref{Th EK ex case} and Theorem~\ref{Th EK weak}, but not in the general situation of Corollary~\ref{Th general max sum}. 
\end{Rem}
}
\section{Examples}\label{Examples}
We finish our discussion with some examples that furnish the results presented in the previous section.

\begin{Ex}\label{Example 1}
Let $\{Y_n\}_{n\in\N}$ be an exchangeable sequence of random variables with $\E Y_1 Y_2=0$ and $\E Y_1^2Y^2_2=1$ that take values in $\{-1,1\}$, and let $\{Z_n\}_{n\in\N}$ be a sequence of i.i.d. standard normal distributed random variables independent of $\{Y_n\}_{n\in\N}$. The sequence 
\[
\{X_n\}_{n\in\N}:=\{Y_n+Z_n\}_{n\in\N}
\]
is exchangeable and the process $S_n=:\sum_{k=1}^nX_k$ may be called exchangeable random walk plus noise. This model appears for instance in Bayesian dynamic modeling~\cite[Chapter 8]{DDPS13}. For this sequence, $\E X_n=\E Y_n+\E Z_n=0$ and 
\[
\E X_1 X_2 =\E(Y_1+Z_1)(Y_2+Z_2)=\E Y_1Y_2+\E Y_1\E Z_2+\E Z_1 \E Y_2 +\E Z_1\E Z_2=0.
\]
Moreover, it is non-degenerate in the sense that
\[
\p(\sigma_\F^2>0)=\p(\E_\F X_1^2>0)\geq\p(\E_\F Y_1^2>0)=1,
\]
and therefore $\p(\sigma_\F^2=0)=0$. Thus, we can investigate the asymptotic distribution of the stopping times
\[
T_n(x):=\inf\{k\geq 1~\colon~S_k>x\sqrt{n}\}
\]
by studying the asymptotic distribution of the maximum of partial sums $S_k$. Applying Theorem~\ref{Th EK weak} we get
\begin{align*}
\lim_{n\to\infty}\p(T_n(x)\leq n)&=\lim_{n\to\infty}\p(\max(S_1,\ldots,S_n)> x\sqrt{n})\\
&=1-\lim_{n\to\infty}\p(\max(S_1,\ldots,S_n)\leq x\sqrt{n})\\
&=1-G_\mu(x),
\end{align*}
where $G_\mu$ is given by~\eqref{eq mixture}.
%
\end{Ex}

As pointed out in Remark~\ref{remark degenerate}, it is possible to have an exchangeable sequence of non constant random variables with $\p(\sigma^2_\F=0)>0$. The following example illustrates how this situation may arise in applications.
\begin{Ex}\label{Example 2}
In financial modelling, the risk of a financial asset can be represented by a sequence $\{\xi_n \}_{n\in\N}$ of i.i.d. real-valued random variables, for instance with zero mean and positive variance $\sigma^2>0$. The quantity $\max_{1\leq k\leq n}\sum_{i=1}^k\xi_i$ thus expresses the maximal loss over a certain period. Exchangeable models appear in this setting for instance by introducing an independent default indicator $Y$ 
such that $\p(Y=1)=1-\p(Y=0)=p$, $p\in(0,1)$. In this case, the sequence
\[
\{X_n\}_{n\in\N}:=\{Y\xi_n\}_{n\in\N}
\]
is exchangeable with $\E X_1=p\,\E\xi_1=0$ and $\E X_1^2=p\,\E\xi_1^2=p\,\sigma^2´>0$. This example is especially illustrative because it is possible to verify Theorem~\ref{Th EK weak} by calculating the limiting probability of the maximal loss directly. Since $\xi_n$ are i.i.d. we have that $\E X_1X_2=\E Y^2\xi_1\xi_2=p\,\E\xi_1\xi_2=p(\E\xi_1)^2=0$ and $\E X^2_1X^2_2=\E Y^4\xi^2_1\xi^2_2=p\,\E\xi^2_1\xi^2_2>0$, which might not necessarily be one.

For the partial sums $S_n:=\sum_{i=1}^n X_i$, the limiting probability
\[
\lim_{n\to\infty}\p(\max(S_1,\ldots ,S_n) \leq x\sqrt{n})
\]
can be obtained as follows: Let $\tilde{S}_n:=\sum_{k=1}^n \xi_k$. By conditioning on $Y$ and applying the classical result of Erd\"os and Kac to $\tilde{S}_n/\sigma$ we have that 
\begin{align*}
\lim_{n\to\infty}\p(\max_{1\leq k\leq n}S_k \leq x\sqrt{n})
&=p\,G(x/\sigma)+(1-p)\1_{[0,\infty)}(x).
\end{align*}

Let us now see that this coincides with the limiting expression in our Theorem. In this case, the directing measure $\F$ is discrete and takes the values $F_1$ and $F_2$ with probability $p$ and $1-p$ respectively, and 
\[
X_n=\left\{
\begin{array}{ll}
\xi_n&\text{ under }F_1,\\
&\\
0&\text{ under }F_2,
\end{array}
\right.
\]
for each $n\in\N$. Thus, $\E_{F_1} X_1=\E\xi_1=0=\E_{F_2}X_2$ and 
$\Var_{F_1}X_1=\sigma^2$, $\Var_{F_2}X_1=0$. In particular $X_1\equiv 0$ under $F_2$, hence
\[
\p(\E_\F X_1=0,\sigma_\F^2=0)=\p(\F=F_2)=1-p
\]
and
\[
G_\mu(x)=\int_\f\1_{\{0\}}(\E_F X_1)\1_{(0,\infty)}(\sigma_F^2)\, G(x/\sigma_F)\,\mu(dF)=p\,G(x/\sigma)
\]
as desired.
\end{Ex}
%

\section*{Acknowledgments}
This research was partially supported by the RFBR grant 13-01-00612. The authors thank Prof. Bulinski, Prof. Spodarev and the referees for helpful comments.

\bibliographystyle{amsplain}
\bibliography{References}

\providecommand{\bysame}{\leavevmode\hbox to3em{\hrulefill}\thinspace}
\providecommand{\MR}{\relax\ifhmode\unskip\space\fi MR }
\providecommand{\MRhref}[2]{%
  \href{http://www.ams.org/mathscinet-getitem?mr=#1}{#2}
}
\providecommand{\href}[2]{#2}
\begin{thebibliography}{10}

\bibitem{Ald85}
D.~J. Aldous, \emph{Exchangeability and related topics}, \'{E}cole d'\'et\'e de
  probabilit\'es de {S}aint-{F}lour, {XIII}---1983, Lecture Notes in Math.,
  vol. 1117, Springer, Berlin, 1985, pp.~1--198.

\bibitem{Ald10}
\bysame, \emph{Exchangeability and continuum limits of discrete random
  structures}, Proceedings of the {I}nternational {C}ongress of
  {M}athematicians. {V}olume {I}, Hindustan Book Agency, New Delhi, 2010,
  pp.~141--153.

\bibitem{Aus08}
T.~Austin, \emph{On exchangeable random variables and the statistics of large
  graphs and hypergraphs}, Probab. Surv. \textbf{5} (2008), 80--145.

\bibitem{Ber62}
S.~M. Berman, \emph{Limiting distribution of the maximum term in sequences of
  dependent random variables}, Ann. Math. Statist. \textbf{33} (1962),
  894--908.

\bibitem{BR97}
P.~Berti and P.~Rigo, \emph{A {G}livenko-{C}antelli theorem for exchangeable
  random variables}, Statist. Probab. Lett. \textbf{32} (1997), no.~4,
  385--391.

\bibitem{BCRT58}
J.~R. Blum, H.~Chernoff, M.~Rosenblatt, and H.~Teicher, \emph{Central limit
  theorems for interchangeable processes}, Canad. J. Math. \textbf{10} (1958),
  222--229.

\bibitem{Bui82}
T.~A. Buishand, \emph{Some methods for testing the homogeneity of rainfall
  records}, Journal of Hydrology \textbf{58} (1982), no.~1, 11 -- 27.

\bibitem{BR15}
A.~Bulinski and A.~Rakitko, \emph{Simulation and analytical approach to the
  identification of significant factors}, Communications in Statistics Part B:
  Simulation and Computation \textbf{44} (2015), 1--23.

\bibitem{Cha06}
S.~Chatterjee, \emph{A generalization of the {L}indeberg principle}, Ann.
  Probab. \textbf{34} (2006), no.~6, 2061--2076.

\bibitem{CM15}
A.~Coen and R.~H Mena, \emph{Ruin probabilities for {B}ayesian exchangeable
  claims processes}, Journal of Statistical Planning and Inference (2015), in
  press.

\bibitem{DDPS13}
P.~Damien, P.~Dellaportas, N.~G. Polson, and D.~A. Stephens (eds.),
  \emph{Bayesian theory and applications}, Oxford University Press, Oxford,
  2013.

\bibitem{DeF30}
B.~{De Finetti}, \emph{Funzione caratteristica di un fenomeno aleatorio}, Atti
  Accad. Naz. Lincei Rend. Cl. Sci. Fis. Mat. Nat. \textbf{4} (1930), 86--133.

\bibitem{Eag82}
G.~K. Eagleson, \emph{Weak limit theorems for exchangeable random variables},
  Exchangeability in probability and statistics ({R}ome, 1981), North-Holland,
  Amsterdam-New York, 1982, pp.~251--268.

\bibitem{EK46}
P.~Erd{\"o}s and M.~Kac, \emph{On certain limit theorems of the theory of
  probability}, Bull. Amer. Math. Soc. \textbf{52} (1946), 292--302.

\bibitem{FLR12}
S.~Fortini, L.~Ladelli, and E.~Regazzini, \emph{Central limit theorem with
  exchangeable summands and mixtures of stable laws as limits}, Boll. Unione
  Mat. Ital. (9) \textbf{5} (2012), no.~3, 515--542.

\bibitem{GST00}
A.~Gerardi, F.~Spizzichino, and B.~Torti, \emph{Exchangeable mixture models for
  lifetimes: the role of ``occupation numbers''}, Statist. Probab. Lett.
  \textbf{49} (2000), no.~4, 365--375.

\bibitem{Hey67}
C.~C. Heyde, \emph{Asymptotic renewal results for a natural generalization of
  classical renewal theory}, J. Roy. Statist. Soc. Ser. B \textbf{29} (1967),
  141--150.

\bibitem{Hur51}
H.~E. Hurst, \emph{Long-tem storage capacity of reservoirs}, Trans. Am. Soc.
  Eng. \textbf{116} (1951), no.~1, 770--799.

\bibitem{JJS87}
B.~James, K.~L. James, and D.~Siegmund, \emph{Tests for a change-point},
  Biometrika \textbf{74} (1987), no.~1, 71--83.

\bibitem{Kin80}
J.~F.~C. Kingman, \emph{Mathematics of genetic diversity}, CBMS-NSF Regional
  Conference Series in Applied Mathematics, vol.~34, Society for Industrial and
  Applied Mathematics (SIAM), Philadelphia, Pa., 1980.

\bibitem{Nov12}
S.~Y. Novak, \emph{Extreme value methods with applications to finance},
  Monographs on Statistics and Applied Probability, vol. 122, CRC Press, Boca
  Raton, FL, 2012.

\bibitem{PR09}
B.~L.~S. Prakasa~Rao, \emph{Conditional independence, conditional mixing and
  conditional association}, Ann. Inst. Statist. Math. \textbf{61} (2009),
  no.~2, 441--460.

\bibitem{Sie68}
D.~Siegmund, \emph{On the asymptotic normality of one-sided stopping rules},
  Ann. Math. Statist \textbf{39} (1968), 1493--1497.

\bibitem{HT87}
R.~L. Taylor and T.~C. Hu, \emph{On laws of large numbers for exchangeable
  random variables}, Stochastic Anal. Appl. \textbf{5} (1987), no.~3, 323--334.

\bibitem{Web80}
N.~C. Weber, \emph{A martingale approach to central limit theorems for
  exchangeable random variables}, J. Appl. Probab. \textbf{17} (1980), no.~3,
  662--673.

\bibitem{YZ08}
C.~Yu and D.~Zelterman, \emph{Sums of exchangeable {B}ernoulli random variables
  for family and litter frequency data}, Computational Statistics and Data
  Analysis \textbf{52} (2008), no.~3, 1636 -- 1649.

\bibitem{YL15}
D.~Yuan and S.~Li, \emph{Extensions of several classical results for
  independent and identically distributed random variables to conditional
  cases}, J. Korean Math. Soc. \textbf{52} (2015), no.~2, 431--445.

\bibitem{YWL14}
D.~Yuan, L.~Wei, and L.~Lei, \emph{Conditional central limit theorems for a
  sequence of conditional independent random variables}, J. Korean Math. Soc.
  \textbf{51} (2014), no.~1, 1--15.

\end{thebibliography}

\end{document}